\newtheorem{theorem}{Theorem}[section]
\newtheorem*{theorem*}{Theorem}
\newtheorem{lemma}{Lemma}[section]
\newtheorem{proposition}{Proposition}[section]
\theoremstyle{remark}
\newtheorem{remark}{Remark}
\numberwithin{equation}{section}
\newcommand{\aut}{\mbox{Aut}}
\title{On Moments of non-normal number fields}
\author{Kalyan Chakraborty}
\email[Kalyan Chakraborty]{kalychak@ksom.res.in}
\address{ Kerala School of Mathematics, KCSTE,
	Kunnamangalam, Kozhikode, Kerala, 673571, India.}
\author{Krishnarjun K}
\email[Krishnarjun K]{krishnarjunk@hri.res.in, krishnarjunmaths@gmail.com}
\address{Harish-Chandra Research 
	Institute, HBNI,
	Chhatnag Road, Jhunsi, Prayagraj 211019, India.}
\keywords{Artin $ L $ function, Non-normal fields, Moments, Dedekind zeta function, Finite group representations.}
\subjclass[2020] {11F66, 11F30, 11R42, 20C30}
\begin{document}
	
	\maketitle
	
	\begin{abstract}
		Let $ K $ be a number field over $ \mathbb{Q} $ and let $ a_K(m) $ denote the number of integral ideals of $ K $ of norm equal to $ m\in\mathbb{N} $. In this paper we obtain asymptotic formulae for sums of the form $ \sum_{m\leq X} a^l_K(m) $ thereby generalizing the previous works on the problem. Previously such asymptotics were known only in the case when $ K $ is Galois or when $K$ was a non normal cubic extension and $ l=2,3 $. The present work subsumes both these cases.
	\end{abstract}
	
	\section{Introduction}\label{Section "Introduction"}
	
	Let $ K $ be a finite extension of $ \mathbb{Q} $ and let $ a_K(m) $ denote the number of integral ideals of $ K $ of norm equal to $ m\in \mathbb{N} $. For any other natural number $ l $, the $ l $-th moment of the Dedekind zeta function associated to $ K $ is defined as the sum
	
	\begin{equation}\label{Equation "Definition Moments"}
		\mathcal{S}_l(X) = \sum_{m\leq X} a_K^l(m).
	\end{equation}
	
	As a consequence of the approximate functional equation developed by Chandrasekharan and Narasimhan \cite{Chand Nar}, Chandrasekharan and Good \cite{Chand Good} gave the following asymptotic formulae for the moments of the Dedekind zeta function of Galois extensions of $ \mathbb{Q} $.
	\begin{theorem}[Chandrasekharan - Good]\label{Theorem "Chand Good"}
		If $ K $ is a Galois extension of $ \mathbb{Q} $ of degree $ n > 1 $, then for every $ \epsilon >0 $ and any integer $ l\geq 2 $, we have
		\[
		\sum_{m\leq X} a_K(m)^l = XP_l(\log(X)) + O(X^{1-2n^{-l}+\epsilon})
		\]
		where $ P_l $ denotes a suitable polynomial of degree $ n^{l-1}-1 $.
	\end{theorem}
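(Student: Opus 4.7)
The strategy is to study the Dirichlet series
\[
D_l(s) := \sum_{m=1}^{\infty}\frac{a_K(m)^l}{m^s},
\]
and apply the approximate functional equation of \cite{Chand Nar}. Since $a_K$ is multiplicative, so is $a_K^l$, and $D_l(s)$ admits an Euler product. At an unramified prime $p$ with Frobenius element $\sigma_p$ in $G:=\aut(K/\Q)$, the local Euler factor of $\zeta_K$ is $\det(1-\rho(\sigma_p)p^{-s})^{-1}$, where $\rho$ denotes the regular representation of $G$; in particular $a_K(p)=\chi_\rho(\sigma_p)$, and consequently each $a_K(p^k)$, and each of its $l$-th powers, is a class function in $\sigma_p$.

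Expanding $a_K(p)^l=\chi_\rho(\sigma_p)^l=\chi_{\rho^{\otimes l}}(\sigma_p)$ into irreducible characters via orthogonality yields a decomposition of the leading local data, from which one obtains a factorization
\[
D_l(s) = \prod_{\pi\in\widehat{G}} L(s,\pi)^{e_\pi}\cdot H_l(s),
\]
where $e_\pi\in\Z$ are integers determined by the irreducible decomposition of $\rho^{\otimes l}$, and $H_l(s)$ is an Euler product absolutely convergent for $\Re(s)>1/2$ which collects the ramified primes and the higher-prime-power corrections. Because $\chi_\rho(e)=n$ and $\chi_\rho(g)=0$ for $g\neq e$, the multiplicity of the trivial representation in $\rho^{\otimes l}$ equals $\langle \chi_\rho^l,\mathbf{1}\rangle = n^{l-1}$, so $\zeta(s)=L(s,\mathbf{1})$ appears to the power $n^{l-1}$. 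The Galois assumption ensures that the remaining factors $L(s,\pi)$ are holomorphic at $s=1$, and the resulting pole of order exactly $n^{l-1}$ at $s=1$ produces, after contour shifting or the approximate functional equation, the main term $XP_l(\log X)$ with $\deg P_l=n^{l-1}-1$.

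A Hecke-type functional equation for $D_l(s)$ is then assembled from the individual Artin functional equations of the constituents $L(s,\pi)$, yielding a gamma factor of total degree $n^l$. Feeding this into the Chandrasekharan--Narasimhan machinery produces an approximate functional equation for $\mathcal{S}_l(X)$; reading off the residues at $s=1$ gives the polynomial main term, while a convexity estimate across the critical strip yields the error $O(X^{1-2n^{-l}+\epsilon})$, the exponent $2n^{-l}$ being the standard saving associated to a functional equation of degree $n^l$.

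\textbf{Main difficulty.} The principal technical issue is organizing the Euler product for $D_l(s)$ into a clean product of Artin $L$-functions together with a remainder $H_l(s)$ that is sufficiently regular. The tensor powers of the regular representation generate contributions at prime powers $p^k$ with $k\geq 2$ and at ramified primes whose $L$-function-theoretic content is not a priori manifest, and one must verify that these assemble into a factor that is holomorphic and polynomially bounded on a half-plane strictly to the right of $\Re(s)=1/2$, so as not to spoil either the main term or the exponent in the error.
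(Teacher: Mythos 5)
This statement is a quoted result of Chandrasekharan--Good \cite{Chand Good}; the paper does not prove it, but recovers it in \S\ref{Subsection "The Galois Case"} as the degenerate case $H=\{e_H\}$ of its general machinery, where $D_l(s)=L(s,\chi_1)^{\alpha(l)}U_l(s)$ with $\alpha(l)=n^{l-1}$ and $L(s,\chi_1)=\zeta_K(s)$. Your outline is essentially this same argument, and it is correct, but two remarks are in order. First, your factorization $\prod_{\pi}L(s,\pi)^{e_\pi}$ collapses: since $\rho$ is the regular representation, $\rho\otimes\pi\cong(\dim\pi)\,\rho$ for every $\pi$, hence $\rho^{\otimes l}\cong n^{l-1}\rho$ and the product is simply $\zeta_K(s)^{n^{l-1}}$; no Artin holomorphy input is needed beyond the fact that $\zeta_K$ has a single simple pole. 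Equivalently --- and this is how both \cite{Chand Good} and the present paper (Proposition \ref{Proposition "The general result"} and Table \ref{Table 1}) phrase the key arithmetic fact --- for unramified $p$ one has $a_K(p)\in\{0,n\}$, so $a_K(p)^l=n^{l-1}a_K(p)$ and $D_l(s)\zeta_K(s)^{-n^{l-1}}$ has Euler factors $1+O(p^{-2s+\epsilon})$. This also gives the pole order $n^{l-1}$ and $\deg P_l=n^{l-1}-1$ exactly as you computed. Second, the one genuinely load-bearing step you assert rather than derive is the error exponent: a bare convexity bound fed into Perron's formula (the route of \S\ref{Section "Estimates on Moments"}) yields a power saving but not, in general, the specific exponent $1-2n^{-l}$; that exponent comes from applying the Chandrasekharan--Narasimhan approximate functional equation \cite{Chand Nar} to $\zeta_K(s)^{n^{l-1}}$, a degree-$n^l$ object, which is precisely what \cite{Chand Good} does. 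Since the statement is cited rather than proved here, this gap is acceptable for context, but it is the part of your plan that would require real work.
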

	
	Earlier, Chandrashekaran and Narasimhan studied the special case of the above result for $ l=2 $ for all number fields. In the Galois case, they were able to establish the asymptotic similar to Theorem \ref{Theorem "Chand Good"} above but for the non Galois case they were only able to establish an upper bound for the moments.
	
	\begin{theorem}[Chandrasekharan - Narasimhan]\label{Theorem "Chand Nar"}
		If $ K $ is a number field of degree $ n $ over $ \mathbb{Q} $, and $ a_K(m) $ as above, then 
		\[
		\sum_{m\leq X} a_K^2(m) \ll X(\log(X))^{n-1}.
		\]
	\end{theorem}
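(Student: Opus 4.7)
The plan is to study the Dirichlet series $D(s) = \sum_{m \geq 1} a_K(m)^2 m^{-s}$, show that it has at worst a pole of order $n$ at $s = 1$, and then extract the claim by a Tauberian argument. By unique factorisation of ideals, $a_K$ is multiplicative, hence so is $a_K^2$, and $D(s)$ admits an Euler product $\prod_p L_p(s)$ with $L_p(s) = \sum_{k \geq 0} a_K(p^k)^2 p^{-ks}$.

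The essential input is the elementary bound $a_K(p) \leq n$, valid because the ideal $(p)$ factors in $\mathcal{O}_K$ into at most $n = [K:\mathbb{Q}]$ distinct prime ideals, of which $a_K(p)$ counts only the subset of residue degree one. This gives $a_K(p)^2 \leq n \cdot a_K(p)$ pointwise. Using this together with the universal estimate $a_K(p^k) \leq \binom{n+k-1}{k}$ to absorb the higher prime-power contributions (whose total is absolutely convergent for $\mathrm{Re}(s)>1/2$), one obtains
\[
\log D(s) \;=\; \sum_p a_K(p)^2 p^{-s} + O(1) \;\leq\; n \sum_p a_K(p) p^{-s} + O(1) \;=\; n \log \zeta_K(s) + O(1)
\]
as $s \to 1^{+}$. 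Since $\zeta_K$ has a simple pole at $s = 1$, this forces $D(s) \ll (s-1)^{-n}$ in a right-neighbourhood of that point.

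The final step is a Perron-type contour shift past $s = 1$, which, on invoking polynomial growth estimates for $\zeta_K(s)$ on vertical lines (supplied by the approximate functional equation of Chandrasekharan-Narasimhan), extracts
\[
\sum_{m \leq X} a_K(m)^2 \ll X (\log X)^{n-1}.
\]
The principal obstacle is precisely this Tauberian step: the naive Mellin bound $\sum_{m \leq X} a_K^2(m) \leq X^\sigma D(\sigma)$ taken at $\sigma = 1 + 1/\log X$ loses a logarithm and yields only $X(\log X)^n$. Recovering the sharp exponent demands a genuine contour shift and growth control of $\zeta_K$ along vertical lines; for a possibly non-Galois $K$ this is delicate, since $\zeta_K$ is not \emph{a priori} expressible as a product of classical $L$-functions, and the approximate functional equation is precisely the analytic tool that supplies the needed input.
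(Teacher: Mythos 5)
Note first that the paper contains no proof of this statement: it is quoted from Chandrasekharan--Narasimhan \cite{Chand Nar}, whose argument runs through the approximate functional equation and a mean-value estimate for $|\zeta_K|^2$ on vertical lines. Your arithmetic inputs are sound: $a_K(p)\le n$ because only the degree-one primes above $p$ contribute ideals of norm $p$; hence $a_K(p)^2\le n\,a_K(p)$ pointwise, and since $a_K(p^k)\le\binom{n+k-1}{n-1}$ the prime powers $k\ge 2$ contribute $O_n(1)$ to $\log D(\sigma)$ near $\sigma=1$. This correctly yields $\sum_p a_K(p)^2p^{-\sigma}\le n\log\frac{1}{\sigma-1}+O(1)$, i.e. $D(\sigma)\ll(\sigma-1)^{-n}$ as $\sigma\to1^{+}$ along the reals, and this is genuinely where the exponent $n-1$ comes from.

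The gap is your final step. A Perron contour shift past $s=1$ requires (i) meromorphic continuation of $D(s)$ \emph{itself} to the left of $\mathrm{Re}(s)=1$ and (ii) growth bounds for $D(s)$, not for $\zeta_K(s)$, on vertical lines. Neither is available from what you have proved: the comparison $\log D(s)\le n\log\zeta_K(s)+O(1)$ is an inequality between real numbers valid only for real $s>1$; it is not an identity $D(s)=\zeta_K(s)^{n}U(s)$ with $U$ holomorphic and controlled, and for non-Galois $K$ no such factorization is known --- establishing the continuation of $D_2(s)$ is precisely the content of Theorem \ref{Theorem "Analytic continuation"} of the present paper and requires structural hypotheses on the Galois closure. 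One cannot shift a contour through an inequality. The statement is, however, only an upper bound for a non-negative multiplicative function, so no analytic continuation is needed: the Halberstam--Richert (Shiu/Wirsing-type) inequality
\[
\sum_{m\le X}f(m)\;\ll\;\frac{X}{\log X}\exp\Bigl(\sum_{p\le X}\frac{f(p)}{p}\Bigr),
\]
applies to $f=a_K^2$ since $f(p^k)\ll_n k^{2n-2}$ is admissibly small at prime powers, and together with $\sum_{p\le X}a_K(p)/p=\log\log X+O(1)$ (Mertens for $\zeta_K$, needing only the simple pole at $s=1$) it gives $\frac{X}{\log X}(\log X)^{n}=X(\log X)^{n-1}$ directly, repairing exactly the logarithm that the Rankin bound at $\sigma=1+1/\log X$ loses. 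Alternatively one follows the cited route, converting the sum via the approximate functional equation into a second-moment estimate for $\zeta_K$ on a vertical line; but as written, your concluding step does not go through.
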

	
	In general the asymptotics of sums of the type $ \mathcal{S}_l(X) $ are of the form $ XP_l(\log(X)) $ for a polynomial $ P_l $ of some suitable degree. This is a direct consequence of the existence of a high order pole of certain $ L $ series at the abscissa of convergence. Interestingly, we shall see that the upper bound in Theorem \ref{Theorem "Chand Nar"} is rarely sharp and the actual sum grows slower in many orders of magnitude.
	
	Using the strong Artin conjecture, Fomenko \cite{Fomenko} proved the asymptotic relations for the case of a non normal cubic field whose Galois closure has Galois group isomorphic to $ S_3 $ and estimated the first and second moments. More precisely he proved the following theorem.
	
	\begin{theorem}[Fomenko]\label{Theorem "Fomenko"}
		Let $ K $ be a non normal extension of $ \mathbb{Q} $ of degree $ 3 $ such that the Galois closure of $ K $ over $ \mathbb{Q} $ has Galois group isomorphic to $ S_3 $. Then
		\begin{align*}
			\sum_{m\leq X} a^2_K(m) &= C_1X\log(X) + C_2X + O(X^{9/11+\epsilon})\\
			\sum_{m\leq X} a^3_K(m) & = XP_3(\log(X)) + O(X^{73/79 + \epsilon}),
		\end{align*}
		where $ P_3 $ is a polynomial of degree $ 4 $.
	\end{theorem}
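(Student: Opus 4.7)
\emph{Proof proposal.} The plan is to leverage the Artin factorisation of $\zeta_K(s)$ together with the classical fact that the strong Artin conjecture holds for $S_3$. Let $\widetilde{K}$ be the Galois closure of $K$, $G = \mathrm{Gal}(\widetilde{K}/\Q) \cong S_3$, and write $\rho$ and $\varepsilon$ for the standard two-dimensional representation and the sign character of $G$. If $H\leq G$ is the order-two subgroup fixing $K$, then $\mathrm{Ind}_H^G \mathbf{1} = \mathbf{1}\oplus\rho$, yielding
\[
\zeta_K(s) = \zeta(s)\, L(s,\rho).
\]
Since $\rho$ is induced from a non-trivial character of $A_3$, Hecke's theorem identifies $L(s,\rho)$ with the $L$-function of a weight-one theta series, so it is entire and satisfies the usual convexity estimates on vertical lines. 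The character $\varepsilon$ corresponds via class field theory to the quadratic character $\chi$ cutting out the resolvent field.

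The function $a_K^l$ is multiplicative, so $D_l(s):=\sum_m a_K^l(m)/m^s$ has an Euler product, and at each unramified prime $p$ the coefficients $a_K(p^k)$ are symmetric functions of the two eigenvalues of $\rho(\mathrm{Frob}_p)$. A direct comparison of local factors (the ratio of the two sides at each unramified prime being $1+\mathcal{O}(p^{-2s})$) gives
\[
D_l(s) = \zeta(s)^{a_l}\, L(s,\chi)^{b_l}\, L(s,\rho)^{c_l}\, H_l(s),
\]
where the triple $(a_l,b_l,c_l)$ is read off from the decomposition of the character $(\mathbf{1}+\chi_\rho)^l$ into irreducibles of $S_3$ and $H_l(s)$ is an Euler product absolutely convergent for $\mathrm{Re}(s) > 1/2$. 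Using $\chi_\rho^2 = \mathbf{1}+\chi_\varepsilon+\chi_\rho$ and $\chi_\rho\chi_\varepsilon = \chi_\rho$ one computes $(a_2,b_2,c_2) = (2,1,3)$ and $(a_3,b_3,c_3) = (5,4,9)$, so that $D_2$ has a double pole at $s=1$ and $D_3$ a pole of order $5$; these are precisely the orders that produce the shape of the main terms stated in the theorem.

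A truncated Perron formula followed by a shift of contour to $\mathrm{Re}(s) = 1/2 + \delta$ then picks up the pole at $s=1$ as a main term $X P_l(\log X)$ with $\deg P_l = a_l - 1$ (linear for $l=2$, quartic for $l=3$), while the shifted integral and the truncation at height $T$ generate the error. The latter reduces to polynomial bounds for $\zeta$, $L(s,\chi)$ and $L(s,\rho)$ on the shifted line, since the auxiliary factor $H_l$ is harmless there by absolute convergence.

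The genuinely substantive step is pinning down the sharp exponents $9/11$ and $73/79$. The convex bound $L(\tfrac12+it,\rho) \ll (1+|t|)^{1/2+\epsilon}$ is not sufficient on its own; to recover Fomenko's exponents one must invoke a Weyl-type subconvexity estimate for the $GL_2$ $L$-function $L(s,\rho)$, smooth out (or otherwise remove) the $X/T$ loss intrinsic to Perron summation, and carry out a careful optimisation of $T$ against the ninth power $L(s,\rho)^{9}$ in the $l=3$ regime. This analytic bookkeeping, rather than the structural identity for $D_l(s)$, is where I expect the main effort to concentrate.
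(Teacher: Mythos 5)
This statement is quoted from Fomenko's paper and is not proved in the present work; the closest thing to ``the paper's proof'' is the general machinery of \S\S 3--5, which, specialized to $G=S_3=A_3\rtimes C_2$ with $N'=\{e\}$ (so $n''=3$, $n_h''=1$, $h=2$, hence $\alpha(2)=2,\beta(2)=1$ and $\alpha(3)=5,\beta(3)=4$), reproduces exactly your factorization: $L(s,\chi_1)=\zeta(s)L(s,\rho)$ and $L(s,\chi_1\otimes\chi_2)=L(s,\varepsilon)L(s,\rho)$ give $D_2=\zeta^2L(\varepsilon)L(\rho)^3U_2$ and $D_3=\zeta^5L(\varepsilon)^4L(\rho)^9U_3$, matching your $(2,1,3)$ and $(5,4,9)$. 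Your character computation, the pole orders, the degrees of $P_l$, and the Perron-plus-contour-shift outline are all correct and are essentially the same route the paper takes in general.

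The genuine gap is the error exponents $9/11$ and $73/79$, which are the actual content of the theorem beyond the main term, and which you explicitly defer. With only convexity bounds on the line $\mathrm{Re}(s)=1/2+\epsilon$ the optimization $T^{1+A/2}=X^{1/2}$ yields $1-1/(2+A)$ with $A=2\cdot\tfrac12+\tfrac12+3\cdot 1=\tfrac92$ for $l=2$, i.e.\ an exponent $11/13>9/11$; so the stated exponents genuinely require subconvex or mean-value input (Fomenko uses the modularity of $\rho$ together with specific estimates for the resulting $GL(2)$ $L$-function and its powers, e.g.\ second-moment bounds for $L(s,f)$ on the critical line rather than pointwise Weyl bounds), and your proposal neither identifies which estimates are needed nor checks that they produce $9/11$ and $73/79$ after optimizing $T$. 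A secondary imprecision: $L(s,\rho)$ is the $L$-function of a weight-one theta series only when the quadratic resolvent field is imaginary (equivalently the inducing character is odd); in the real case one gets a Maass form instead. Entirety and the relevant analytic estimates survive in both cases, but as written the claim is not quite right.
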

	
	The exponents in the error terms were improved by L\"u \cite{Lu1}.
	
	\begin{theorem}[L\"u]\label{Theorem "Lu"}
		Let $ K $ be as in Theorem \ref{Theorem "Fomenko"}, then 
		\begin{align*}
			\sum_{m\leq X} a^2_K(m) &= C_1X\log(X) + C_2X + O(X^{23/31+\epsilon})\\
			\sum_{m\leq X} a^3_K(m) & = XP_3(\log(X)) + O(X^{235/259 + \epsilon}).
		\end{align*}
	\end{theorem}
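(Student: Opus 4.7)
\medskip
\noindent\emph{Proof proposal.} The plan is to exploit the factorisation
\[
\zeta_K(s) \;=\; \zeta(s)\,L(s,\rho),
\]
where $\rho$ is the irreducible two-dimensional representation of $\mathrm{Gal}(\widetilde{K}/\Q) \cong S_3$, with $\widetilde{K}$ the Galois closure of $K$. Since $S_3$ is solvable, the Langlands--Tunnell theorem realises $L(s,\rho)$ as the $L$-function $L(s,f)$ of a cuspidal newform $f$ of weight one; this $L$-function is entire, admits an Euler product, and satisfies a functional equation with standard growth bounds on vertical lines. In particular $a_K = 1 \ast \lambda_f$ as a Dirichlet convolution, where $\lambda_f$ denotes the Fourier coefficients of $f$.

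The next step is to manufacture a tractable expression for
\[
D_l(s) \;:=\; \sum_{m \geq 1} \frac{a_K(m)^l}{m^s}, \qquad l = 2, 3.
\]
Working prime by prime, the local Euler factor of $D_l$ is a rational function in $p^{-s}$ assembled from the two Satake parameters of $f$. Expanding and applying the Clebsch--Gordan decomposition for symmetric powers of the standard representation of $\mathrm{GL}_2$ (together with $\rho \otimes \rho \cong \mathbf{1} \oplus \epsilon \oplus \rho$ on the Galois side) gives a global factorisation
\[
D_l(s) \;=\; \zeta(s)^{a_l}\,L(s,f)^{b_l}\,\prod_{k\geq 2} L(s,\mathrm{sym}^k f)^{c_{l,k}}\, U_l(s),
\]
with only finitely many $c_{l,k}$ nonzero (at most $k\leq 4$ in the cases $l = 2, 3$) and $U_l(s)$ a Dirichlet series absolutely convergent and bounded in $\Re(s)>1/2$. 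The holomorphy of $L(s,\mathrm{sym}^k f)$ for $k \leq 4$ is supplied by the theorems of Gelbart--Jacquet, Kim and Kim--Shahidi. The order of the pole of $D_l$ at $s = 1$, equal to $a_l$, then dictates the degree of $P_l$: $a_2 = 2$ gives $\deg P_2 = 1$ (so the main term becomes $C_1 X\log X + C_2 X$), and $a_3 = 5$ gives $\deg P_3 = 4$.

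One then applies Perron's formula truncated at height $T$, shifts the contour to the vertical line $\Re(s) = 1 - \sigma$, collects the residue at $s = 1$ (reproducing the asserted main terms), and estimates the contributions of the shifted line and of the horizontal segments at height $T$. Every factor in the product expression for $D_l$ is handled separately via its own functional equation and Phragm\'en--Lindel\"of type interpolation, and the parameters $T$ and $\sigma$ are then balanced to minimise the total error.

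The main obstacle, and the source of the improvement over Fomenko's exponents, lies in the bounds on the symmetric-power $L$-functions on vertical lines near $\Re(s) = 1$. Rather than relying only on pointwise convexity, one invokes second-moment (Ivi\'c--type) estimates for $L(s,\mathrm{sym}^2 f)$ in the case $l = 2$; for $l = 3$ one additionally needs fourth-moment estimates together with the subconvex-flavoured bounds for $L(s,\mathrm{sym}^3 f)$ and $L(s,\mathrm{sym}^4 f)$. A careful bookkeeping of these sharper inputs against the Perron truncation $T$, followed by optimisation in $\sigma$, is precisely where the stated exponents $23/31 = 1 - 8/31$ and $235/259 = 1 - 24/259$ emerge.
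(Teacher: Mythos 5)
This statement is quoted in the paper as a theorem of L\"u \cite{Lu1}; the paper itself gives no proof of it, so there is no internal argument to measure yours against. Your sketch does follow the strategy that the introduction attributes to Fomenko and L\"u: factor $\zeta_K(s)=\zeta(s)L(s,\rho)$ with $\rho$ the two-dimensional irreducible representation of $S_3$, realise $L(s,\rho)$ as $L(s,f)$ for a weight-one cusp form, decompose $D_l(s)$ into a product of $\zeta(s)$, $L(s,f)$ and symmetric-power $L$-functions times a factor holomorphic and bounded in $\Re(s)>1/2$, and run a truncated Perron formula with a contour shift. Your pole orders ($a_2=2$, $a_3=5$) are consistent with the character identity $\chi_\rho^2=\mathbf{1}+\epsilon+\chi_\rho$ and do reproduce the degrees of the main-term polynomials. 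One simplification you are missing: since $\rho=\mathrm{Ind}_{A_3}^{S_3}\psi$ is monomial, $f$ is dihedral and every symmetric power of $\rho$ decomposes into copies of $\mathbf{1}$, $\epsilon$ and $\rho$, so neither Langlands--Tunnell nor the Gelbart--Jacquet/Kim/Kim--Shahidi automorphy results are actually needed; everything reduces to $\zeta(s)$, a quadratic Dirichlet $L$-function, and a Hecke $L$-function.

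The genuine gap is in the error term, which is the entire content of L\"u's theorem as an improvement over Fomenko's. Your sketch stops exactly where that work begins: you say the exponents $23/31$ and $235/259$ ``emerge'' from careful bookkeeping, but you never specify which moment or subconvexity estimates are applied to which factors on which parts of the contour, nor carry out the optimisation in $T$ and $\sigma$. As written, the argument yields an asymptotic with \emph{some} power-saving error term, not the quoted exponents, so the theorem as stated is not established. Note also that this externally quoted route is deliberately different from the method of the paper's own main results, which avoids automorphy input entirely and works directly with Artin $L$-functions under group-theoretic hypotheses on $\mathrm{Gal}(K'/\Q)$; that method recovers weaker exponents but applies to far more general $K$ and to arbitrary moments $l$.
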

	
	However their method was limited to the first and second moments as it relied heavily on deep results from the Langlands programme such as the truth of the strong Artin conjecture and the automorphy of symmetric powers of certain $ GL(2) $ automorphic representations. In this paper we use different methods and study the problem for a general class of non normal extensions whose Galois closure has Galois group which can be written as a semi direct product with certain mild conditions. The novelty of the present work is two fold. We prove the asymptotic estimate for the moments for a general class of number fields whose Galois closures can have a wide range of Galois groups (we have provided an incomplete list of the possible choices in \S \ref{Section "Some Examples"}) and in many cases we are able to successfully estimate arbitrary high moments as well.
	
	Since the Galois case was already considered by Chandrasekharan and Good, the focus of this paper is on the non Galois extensions of $ \mathbb{Q} $. Nevertheless, our methods apply for Galois extensions as a trivial case. Therefore the statements and proofs that follow are primarily concerned with the non Galois case, of which the Galois case becomes a trivial corollary (see \S\ref{Subsection "The Galois Case"}).
	
	Now we shall state the main results of this paper. Let $K$ be a non normal extension of $ \mathbb{Q} $ and let $ K' $ be its Galois closure. We denote $ Gal(K'/\mathbb{Q}) $ as $ G $. Suppose that $ G $ can be written as a semi direct product of two non trivial groups $ N,H $, that is $ G=N\rtimes_\varphi H $ where $ \varphi :H\to Aut(N) $ is a non trivial homomorphism (for details see \S \ref{Section "A slightly general result"}). Suppose that $ Gal(K'/K) = N'\rtimes_\varphi H$ for some normal subgroup $ N' $ of $ N $.
	
	\begin{theorem}\label{Theorem "Main Theorem"}
		Suppose $ K $ is as above. Furthermore, suppose that conditions (\ref{Assumption 1}), (\ref{Assumption 2}) and (\ref{Assumption 3}) (see \S \ref{Section "Analytic continuation"}) are satisfied. Then, for every $ \epsilon > 0 $, we have
		\[
		\mathcal{S}_l(X) = XP_l(\log(X)) + O_\epsilon (X^{\delta+\epsilon}),
		\]
		where $ \delta < 1 $ and $ P_l $ is a polynomial whose degree can be explicitly calculated depending on $ G $ and $ K $.
	\end{theorem}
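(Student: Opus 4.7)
The plan is to derive the asymptotic by studying the analytic behaviour of the Dirichlet series
\[
D_l(s) \;=\; \sum_{m\ge 1}\frac{a_K(m)^l}{m^s}.
\]
Since $a_K$ is multiplicative, so is $a_K^l$, and $D_l(s)$ admits an Euler product whose local factors can be read off from the Frobenius eigenvalues of the induced representation $\rho = \mathrm{Ind}_{H_K}^{G}\mathbf{1}$, where $H_K=\mathrm{Gal}(K'/K)=N'\rtimes_\varphi H$. At an unramified prime $p$ with Frobenius eigenvalues $\alpha_1,\dots,\alpha_n$ ($n=[K:\Q]$), one has $a_K(p^k)=h_k(\alpha_1,\dots,\alpha_n)$, the complete homogeneous symmetric polynomial, so $h_k^{\,l}$ is a polynomial in the power sums $p_j(\alpha)=\mathrm{tr}\,\rho(\mathrm{Frob}_p^{\,j})$. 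Combining the local pieces and collecting exponents packages $D_l(s)$ into a finite product
\[
D_l(s) \;=\; \prod_i L(s,\chi_i)^{m_i}\cdot H_l(s),
\]
where $\chi_i$ are virtual characters of $G$, $m_i\in\Z$, and $H_l(s)$ is holomorphic and bounded in some half-plane $\mathrm{Re}(s)>1/2$.

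The second step is to decompose each $\chi_i$ into irreducible characters of $G$ and apply Clifford theory to the semidirect product $G=N\rtimes_\varphi H$. Irreducibles of $G$ are built from $H$-orbits of characters of $N$ together with projective representations of their stabilizers in $H$; this realizes the pieces either as monomial (hence induced from a character of a subgroup, so covered by Artin/Hecke) or as inflations/inductions of representations of $H$. Assumptions (\ref{Assumption 1})--(\ref{Assumption 3}) of \S\ref{Section "Analytic continuation"} are then invoked to guarantee meromorphic continuation of each $L(s,\chi_i)$ to a half-plane $\mathrm{Re}(s)>\delta$ with $\delta<1$, along with polynomial growth on vertical lines. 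The pole of $D_l(s)$ at $s=1$ comes from the trivial constituents: its order equals $\langle \chi_{\rho^{\otimes \text{appropriate}}},\mathbf{1}\rangle_G$, computed via Frobenius reciprocity in terms of $G$ and $H_K$, and this integer is exactly $\deg P_l+1$.

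Finally I apply Perron's formula to $D_l(s)$ and shift the contour from $\mathrm{Re}(s)=1+\epsilon$ to some vertical line $\mathrm{Re}(s)=\delta'\in(\delta,1)$. The residue at $s=1$ produces the main term $XP_l(\log X)$, while the shifted segment together with the horizontal bridges contributes an error of size $X^{\delta'+\epsilon}$; optimising $\delta'$ against the convexity bounds supplied by (\ref{Assumption 3}) yields the stated exponent $\delta<1$.

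The main obstacle will be the bookkeeping in the second step. Higher moments involve higher symmetric/tensor powers of $\rho$, and although each irreducible constituent is controlled individually by (\ref{Assumption 1})--(\ref{Assumption 3}), the multiplicities $m_i$ can proliferate with $l$, and one must separate the constituents that contribute to the pole at $s=1$ from those that are holomorphic in a neighbourhood of $s=1$. The crux is to carry out this separation in sufficient generality for the semidirect product $N\rtimes_\varphi H$ while keeping the degree of $P_l$ and the error exponent $\delta$ effectively computable from $G$ and the embedding $H_K\subset G$.
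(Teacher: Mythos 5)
Your overall skeleton (Euler product for $D_l(s)$, factorization into $L$-functions, Perron's formula, contour shift) matches the paper's, but the central step of your plan has a genuine gap, and it is precisely the gap that the paper is designed to avoid. You propose to expand $a_K(p)^l=\chi_1(\sigma_p)^l$ via tensor/symmetric powers of $\rho=\ind_{G'}^G\mathbf{1}_{G'}$, decompose the resulting virtual characters into irreducibles by Clifford theory, and then claim that assumptions (\ref{Assumption 1})--(\ref{Assumption 3}) ``guarantee meromorphic continuation of each $L(s,\chi_i)$ to a half-plane $\mathrm{Re}(s)>\delta$ with polynomial growth.'' They do not. The irreducible constituents of $\rho^{\otimes l}$ are in general neither monomial nor one-dimensional, so neither Artin reciprocity nor Hecke theory controls their $L$-functions; one would need the strong Artin conjecture (or automorphy of symmetric/tensor powers), which is exactly the obstruction that confined Fomenko and L\"u to $l=2,3$ in the $S_3$ case, as the introduction points out. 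Your own closing paragraph flags the proliferation of multiplicities $m_i$ as ``bookkeeping,'' but the real problem is not combinatorial: it is that most of the factors $L(s,\chi_i)$ in your proposed product are not known to continue past $\mathrm{Re}(s)=1$ at all.

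The paper sidesteps this entirely. Under (\ref{Assumption 1}) the character $\chi_1$ takes only the values $0$, $n''$ and $n_h''$ on Frobenius classes (Table \ref{Table 1}), so the $2\times 2$ linear system \eqref{Equation "Linear System"} yields the pointwise identity $\chi_1^l=\alpha(l)\,\chi_1+\beta(l)\,\chi_1\otimes\chi_2$ as class functions, where $\chi_2$ is inflated from $H$ and takes only the values $h-1$ and $-1$. Assumption (\ref{Assumption 3}) makes $\alpha(l),\beta(l)$ non-negative integers, and assumption (\ref{Assumption 2}) is the single analytic input needed: that the one specific $L$-function $L(s,\chi_1\otimes\chi_2)$ is entire. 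This gives $D_l(s)=L(s,\chi_1)^{\alpha(l)}L(s,\chi_1\otimes\chi_2)^{\beta(l)}U_l(s)$ with $U_l$ absolutely convergent for $\mathrm{Re}(s)>1/2$ (Theorem \ref{Theorem "Analytic continuation"}), after which Perron and subconvexity finish the argument as you describe. To repair your write-up you would need to replace the Clifford-theoretic decomposition with this two-character identity (or supply the unproven automorphy your version requires); as it stands, the second step of your plan cannot be carried out from the stated hypotheses.
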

	
	In fact, we give explicit formula for $ \delta $ and the degree of $ P_l $ in terms of $ G $. The theorem is proved in two steps. In order to estimate the sum $ \mathcal{S}_l(X) $ we study the $ L $ series defined as
	\begin{equation}\label{Equation "Moment L series definition"}
		D_l(s) : = \sum_{m=1}^\infty \frac{a_K^l(m)}{m^s}.
	\end{equation}
	
	\subsection*{Notations and conventions}
	The symbols $ K,K',N,H $ will carry the same meaning throughout the paper as they carried in \S \ref{Section "Introduction"}. 
	All representations considered will be complex representations. If $ G $ is a group, the identity element of $ G $ will be denoted as $ e_G $.
	
	The paper is structured as follows. In \S \ref{Section "Preliminaries"} we recall some basic facts about certain $ L $ series associated to number fields, in particular the Artin $ L $ series. In \S \ref{Section "A slightly general result"} we prove a slightly general result (see Proposition \ref{Proposition "The general result"}) regarding some divisibility properties of $ a_K(p) $ with a rather minimal amount of conditions on $ K $. This can be seen as an analogue of Lemma 1 of \cite{Chand Good} for the non Galois case. In \S \ref{Section "Analytic continuation"} we study an $ L $ series associated to $ \mathcal{S}_l(X) $ and under some additional assumptions on the structure of $ G $ we show that the $ L $ series can be analytically continued (see Theorem \ref{Theorem "Analytic continuation"}) to the region $ Re(s)>1/2 $. In \S \ref{Section "Estimates on Moments"} we give asymptotic formulae for the moments. Finally in \S \ref{Section "Some Examples"} we give an incomplete
	list of the various examples which fall under the present work so as to illustrate that the restrictions imposed on $ G $ are not too strong.
	
	\section{Preliminaries}\label{Section "Preliminaries"}
	
	In this section we shall recall some basic facts about some of the objects we shall be working with and fix the notations along the way.
	
	\subsection{Dedekind zeta function and Hecke $ L $ functions}
	Let $K$ be a number field of degree $n$ over $\mathbb{Q}$. Then the Dedekind zeta function, $\zeta_K$ of $K$ is defined as
	\[
	\zeta_K (s) = \sum_{\mathfrak{a}\neq 0} \frac{1}{(N\mathfrak{a})^s},
	\]
	where the summation runs over all non-zero integral ideals of $K$ and $N(\mathfrak{a})$ denotes the ideal norm of $\mathfrak{a}$. 
	%Let $ a(m) $ denote the number of integral ideals of norm equal to $ m $ in $ K $. 
	For $ Re(s) >1 $, the Dedekind zeta function can be rewritten as 
	\begin{equation}\label{Equation "Dedekind Zeta function"}
		\zeta_K(s) = \sum_{m=1}^{\infty} \frac{a(m)}{m^s}.
	\end{equation}
	
	It is known that $ a(m)\ll_\epsilon m^\epsilon $ (see \cite{Chand Nar} for example). Therefore $\zeta_K$ converges absolutely for $Re(s)>1$ and admits a meromorphic continuation to the entire complex plane with a simple pole at $s=1$. The residue at this pole is given by the class number formula which states that
	\[
	\underset{s=1}{\mbox{Res}}\ \zeta_K(s) = \frac{2^{r_1}(2\pi)^{r_2}hR}{w\sqrt{|\Delta_K|}},
	\]
	where $h, R, w$ are respectively the class number, regulator and the number of roots of unity of $K$. 
	
	Given a number field $ K $, a $ K $ modulus is a formal product of an integral ideal of $ K $ along with a (possibly empty) collection of real places of $ K $. Given a $ K $ modulus $ \mathfrak{M} $ it is possible to associate a ray class group which is a finite quotient of the group of fractional ideals coprime to $ \mathfrak{M} $ of $ K $. Characters of this ray class groups which satisfy an additional condition are called the Hecke characters associated to $ K $. Hecke characters are natural generalizations of Dirichlet characters, and the related theory is well understood.
	
	Given a Hecke character $ \Psi $ it is possible to associate an $ L $ series to $ \Psi $ as follows
	\begin{equation}\label{Equation "Hecke L function definition"}
		L(s, \Psi ) = \sum_{\mathfrak{a}} \frac{\Psi(\mathfrak{a})}{N(\mathfrak{a})^s},
	\end{equation}
	where the summation runs over non zero integral ideals of $ K $. The Hecke $ L $ functions are well studied and many properties such as analytic continuation and functional equation are well understood. In particular, if the Hecke character is the trivial character, we get back the Dedekind zeta function.
	
	\subsection{Artin $ L $ functions}
	
	Let $ K $ be a Galois extension of $ \mathbb{Q} $ with Galois group $ G $. Let $ \rho $ be a representation of $ G $. For every prime $ \mathfrak{p} $ of $ K $, there is an element $ \sigma_\mathfrak{p}\in G $ called the Frobenius at $ \mathfrak{p} $. If $ \mathfrak{p}_1 $ and $ \mathfrak{p}_2 $ are two primes in $ K $ lying above an unramified prime $ p $, then the Frobenius elements $ \sigma_{\mathfrak{p}_1} $ and $ \sigma_{\mathfrak{p}_2} $ are conjugate.
	
	Artin $ L $ functions are defined as Euler products. Given an $ n $ dimensional representation $ \rho $, at every \textit{unramified} prime $ p $, the local factor is defined as 
	
	\[
	\det(I - p^{-s} \rho(\sigma_\mathfrak{p}))^{-1},
	\]
	where $ I $ is the $ n\times n $ identity matrix and $ \mathfrak{p} $ is a prime over $ p $. The Artin $ L $ function is defined as the following product over the unramified primes
	
	\begin{equation}\label{Equation "Artin L function definition"}
		L(s,\psi)	: = \prod_p \det(I - p^{-s} \rho(\sigma_\mathfrak{p}))^{-1}.
	\end{equation}
	We remark here that the local factors at ramified primes can also be defined, but as they require some amount of terminology and notation, we refrain from describing them. In any case, the \textit{analytic properties} that we shall be interested in are not affected in any serious way by the exclusion of the ramified factors as there are only finitely many of them.
	
	Artin $ L $ functions behave well with direct sums of representations. If $ \rho_1 $ and $ \rho_2 $ are two representations, then $ L(s,\rho_1\oplus\rho_2) = L(s,\rho_1)L(s,\rho_2) $. Furthermore, Artin $ L $ functions are invariant under induction. More precisely, if $ \rho_1 $ is a representation of $ H\leq G $ and if $ \rho_2 = \mbox{Ind}_H^G \rho_1 $, then $ L(s,\rho_1) = L(s,\rho_2) $.
	
	Artin reciprocity is one of the crowning achievements of class field theory in the early twentieth century. In terms of $ L $ functions, Artin reciprocity forms a connection between Artin $ L $ functions of $ 1 $ dimensional representations (characters) of the Galois group and $ L $ functions associated to Hecke characters. In other words, it states for every character $ \chi $ of the Galois group, there is a Hecke character $ \Psi $ such that $ L(s, \chi) = L(s,\Psi) $.
	
	The Dedekind zeta function can be written as a product of Artin $ L $ functions. Let $ K $ be a number field, and let $ K' $  be the Galois closure of $ K $ with Galois group $ G $ over $ \mathbb{Q} $. Let $ H\leq G $ be the subgroup of $ G $ associated to $ K $. Consider the trivial representation of $ H $, say we denote by $ \textbf{1}_H $ and let $ \rho = \mbox{Ind}_H^G \textbf{1}_H $ be the induced representation. Then the Dedekind zeta function $ \zeta_K(s) = L(s,\rho) $. Artin holomorphy conjecture states that the Artin $ L $ functions associated to non-trivial irreducible representations are entire functions.
	
	For more details on the Artin $ L $ functions and the Artin conjecture, we refer the reader to the excellent articles \cite{KnappIntro} and \cite{Prasad}.

	\section{A slightly general result}\label{Section "A slightly general result"}		 
	
	Given two groups $ H,N $ and a group homomorphism $ \varphi : H\to \aut(N) $ we define the semi direct product of $ H $ and $ N $ along $ \varphi $ as the set $ N\times H $ with the group operation $ (n_1,h_1)\cdot(n_2,h_2) = (n_1\varphi(h_1)(n_2), h_1h_2) = (n_1\varphi_{h_1}(n_2),h_1h_2))$. We denote this group by $G:= N\rtimes_\varphi H $. There exists subgroups of $ G $ isomorphic to $ N,H $ which we again denote by $ N,H $ respectively. Furthermore we have $ N\cap H = e_G $, the identity element and $ N $ is normal in $ G $. We shall denote each element in $ G $ as $ nh $ rather than $ (n,h) $ whenever $ n\in N $ and $ h\in H $. A few handy identities are $ hn = \varphi_h(n)h $, $ \varphi_h(\varphi_{g}(n)) = \varphi_{hg}(n) $, $ \varphi_h(m)\varphi_h(n) = \varphi_h(mn) $ for all $ m,n\in N $ and $ h,g\in H $. We also define $ \varphi_{H'}(N')$ as the smallest subgroup of $ G $ containing $ \{\varphi_h(n)\ |\ n\in N', h\in H'\} $ for subgroups $ H' $ of $ H $ and $ N' $ o
	f $ N $. We observe that $ \varphi_{H'}(N') $ is stable under the action of $ H' $ and therefore we can define $ \varphi_{H'}(N')\rtimes_\varphi H'\hookrightarrow G $.
	
	\begin{lemma}\label{Lemma "Equivalence of normality"}
		Let $ N' $ be a subgroup of $ N $ and let $ G=N\rtimes_\varphi H $. Then the following are equivalent,
		\begin{enumerate}
			\item	$ N' $ is normal in $ N $ and $ \varphi_{H}(N')\subseteq N' $,
			\item	$ N' $ is normal in $ G $.
		\end{enumerate}
	\end{lemma}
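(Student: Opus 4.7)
The plan is to verify the equivalence by a direct calculation using the semi-direct product identities already recorded before the lemma, namely $hn = \varphi_h(n)h$ (equivalently, $h n h^{-1} = \varphi_h(n)$) and the multiplicativity of each $\varphi_h$. The proof splits into the two implications, and in each direction the argument reduces to a one-line manipulation of conjugates.

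For the implication $(1) \Rightarrow (2)$, I would take an arbitrary element $g \in G$, write it uniquely as $g = nh$ with $n \in N$ and $h \in H$, pick $n' \in N'$, and compute
\[
g n' g^{-1} = nh\,n'\,h^{-1}n^{-1} = n\,\varphi_h(n')\,n^{-1}.
\]
By the hypothesis $\varphi_H(N') \subseteq N'$ the middle factor $\varphi_h(n')$ lies in $N'$, and then by normality of $N'$ in $N$ the conjugate $n \varphi_h(n') n^{-1}$ also lies in $N'$. Hence $g n' g^{-1} \in N'$, proving that $N'$ is normal in $G$.

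For the converse $(2) \Rightarrow (1)$, normality of $N'$ in $N$ is immediate since $N$ is a subgroup of $G$ and the defining property restricts. For the inclusion $\varphi_H(N') \subseteq N'$, I take any $h \in H \subseteq G$ and any $n' \in N'$, and observe that
\[
\varphi_h(n') = h n' h^{-1} \in N'
\]
by normality of $N'$ in $G$. Since $\varphi_H(N')$ is defined as the smallest subgroup of $G$ containing all such elements, and $N'$ is already a subgroup containing them, we conclude $\varphi_H(N') \subseteq N'$.

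There is essentially no obstacle here: the lemma is a bookkeeping statement about how the normality condition in a semi-direct product decomposes into a condition on the normal factor and a stability condition under the twisting action. The only thing to be careful about is the definition of $\varphi_H(N')$ as a subgroup generated by a set of conjugates, so that one must verify the generators lie in $N'$ (not that $N'$ already contains the set literally), but this is automatic once each $\varphi_h(n')$ is shown to belong to $N'$.
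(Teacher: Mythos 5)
Your proof is correct and follows essentially the same route as the paper: both directions rest on the single identity $nh\,m\,h^{-1}n^{-1} = n\varphi_h(m)n^{-1}$, with the converse obtained by specializing the conjugating element (you take $n=e$, the paper takes $h=e_H$ and then cancels the $n$-conjugation). Your explicit remark about $\varphi_H(N')$ being a generated subgroup is a nice touch of care but does not change the argument.
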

	
	\begin{proof}
		Suppose $ m\in N' $ and $ nh\in G $. Then 
		\begin{equation}\label{Equation "Lemma 1"}
			nhmh^{-1}n^{-1} = n\varphi_h(m)n^{-1}.
		\end{equation}
		Suppose $ (1) $ is true, then clearly $ N' $ is normal in $ G $. Conversely, if $ (2) $ is true, then by taking $ h=e_H $ in \eqref{Equation "Lemma 1"} we see that $ N' $ is normal in $ N $. Then $ n\varphi_h(m)n^{-1}\in N' $ if and only if $ \varphi_h(m)\in N' $. Therefore $ \varphi_H(N')\subseteq N' $.
	\end{proof}
	
	Let $N'$ be a normal subgroup of $N$ such that $ N' =  \varphi_{H}(N')$. Suppose $ G' := N'\rtimes_\varphi H $, a subgroup of $ G $. We define $ N'' = N/N'$. We fix a set of coset representatives for $N'$ in $N$ and identify them with the elements of $ N''$. In other words, every $n\in N$ can be uniquely written as a product $n_1n_2$ where $n_1\in N''$ and $n_2\in N'$. Also, a set of coset representatives representatives of $ G' $ in $ G $ is given by elements of $ N'' $.
	
	Let $ \textbf{1}_{G'} $ denote the trivial representation of $ G' $ and let $ \rho_1 $ denote the induction of $ \textbf{1}_{G'} $ to $ G $. Let $ \chi_1 $ denote the character of $ \rho_1 $, then for any $ g=n_1n_2h \in G $, where $n_1\in N'', n_2 \in N', h\in H$, we have
	
	\begin{align*}
		\chi_1(g) &= \sum_{n\in N''} \textbf{1}_{G'} (n^{-1}gn)\\
		&= \sum_{n\in N''} \textbf{1}_{G'}(n^{-1} n_1n_2h n)\\
		&=  \sum_{n\in N''} \textbf{1}_{G'}(n^{-1}n_1n_2\varphi_h(n) h).
	\end{align*}
	
	If the right hand side is non zero, then we have $ n^{-1}n_1n_2\varphi_h(n)\in N' $. Since $N'$ is normal in $N$, $n_2\varphi_h(n) = \varphi_h(n)\Tilde{n_2}$ for some $\Tilde{n_2}\in N'$. Therefore we have the relation $n^{-1}n_1\varphi_h(n) \in N'$. Since $H$ fixes $N'$, $\varphi_h$ factors through to a map on $N''$. In this sense, the above relation is equivalent to the condition $n^{-1}n_1\varphi_h(n) = e_{N''}$ or equivalently, $n_1 = n \varphi_h(n)^{-1}$. Suppose $N_h''$ denote the subgroup of $N''$ of fixed points of $\varphi_h$. Then $n\varphi(n)^{-1} = m\varphi_h(m)^{-1}$ implies $nm^{-1} \in \varphi_h$. In other words, the number of solutions for the equation $n_1 = n \varphi_h(n)^{-1}$ is either equal to zero or equal to $|N_h''|$. Conversely, the image of the map (which is not necessarily a group homomorphism) $\phi_h(n) : N''\to N''$ defined as $\phi_h(n) = \varphi_h(n)n^{-1}$ is constant on each coset of $N''_H$ in $N''$.
	
	Summarizing the above discussion we can write
	\begin{equation}\label{Equation "chi_1"}
		\chi_1(n_1n_2h) = \begin{cases}
			|N''_h| & \mbox{if } h\in H\mbox{ and } n_1 = n\varphi_h(n^{-1})\ \mbox{for some } n\in N'',\\
			0 & \mbox{otherwise.}
		\end{cases}
	\end{equation}
	
	\begin{proposition}\label{Proposition "The general result"}
		Let $K$ be a non-normal extension of $\mathbb{Q}$ of degree $d$ whose Galois closure we denote by $K'$. Suppose that $Gal(K'/\mathbb{Q}) = N\rtimes_\varphi H$ and $Gal(K'/K) = N'\rtimes_\varphi H$ as above. Then either $a_K(p)=0$ or $a_K(p)$ divides $d$ for all primes $p$ which are unramified in $ K $.
	\end{proposition}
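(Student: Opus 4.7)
The plan is to interpret $a_K(p)$ as a character value via Artin $L$-functions and then apply the formula for $\chi_1$ that has just been derived in this section. Recall that $\zeta_K(s) = L(s, \rho_1)$, where $\rho_1 = \mathrm{Ind}_{G'}^G \mathbf{1}_{G'}$ and $G' = N' \rtimes_\varphi H = \mathrm{Gal}(K'/K)$. Expanding the unramified Euler factor $\det(I - p^{-s} \rho_1(\sigma_p))^{-1}$ and comparing the coefficient of $p^{-s}$ with the Dirichlet series $\zeta_K(s) = \sum_m a_K(m) m^{-s}$, I would obtain the identity
\[
a_K(p) \;=\; \mathrm{tr}\bigl(\rho_1(\sigma_p)\bigr) \;=\; \chi_1(\sigma_p),
\]
where $\sigma_p \in G$ is any Frobenius element above $p$; the right-hand side is well defined because $\chi_1$ is a class function.

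Next, I would write the Frobenius uniquely as $\sigma_p = n_1 n_2 h$ with $n_1 \in N''$, $n_2 \in N'$ and $h \in H$, using the fixed system of coset representatives for $N'$ in $N$ together with the semidirect-product decomposition $G = N \rtimes_\varphi H$. Plugging into formula \eqref{Equation "chi_1"}, two cases arise: either the coset condition $n_1 = n\,\varphi_h(n^{-1})$ has no solution $n \in N''$, so that $a_K(p) = \chi_1(\sigma_p) = 0$ and the proposition is trivial; or the condition is satisfied and $a_K(p) = |N''_h|$, the cardinality of the $\varphi_h$-fixed subgroup of $N''$.

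In the latter case, since $N''_h$ is a subgroup of the finite group $N''$, Lagrange's theorem gives $|N''_h|$ divides $|N''|$. I would then close the argument with the string of equalities
\[
|N''| \;=\; [N:N'] \;=\; \frac{|N|\,|H|}{|N'|\,|H|} \;=\; \frac{|G|}{|G'|} \;=\; [K:\mathbb{Q}] \;=\; d,
\]
so that $a_K(p) = |N''_h|$ divides $d$, as required.

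I do not anticipate a serious obstacle: the argument is essentially a clean application of the character formula \eqref{Equation "chi_1"} together with elementary group theory. The only step that warrants a careful justification is the identification $a_K(p) = \chi_1(\sigma_p)$ for unramified $p$, and a secondary bookkeeping issue is verifying that every conjugate of the form $n_1 n_2 h$ gives the same value of $|N''_h|$, which follows at once from $\chi_1$ being a class function on $G$.
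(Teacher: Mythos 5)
Your proposal is correct and follows the same route as the paper's own (much terser) proof: identify $a_K(p)=\chi_1(\sigma_p)$ via the Euler factor of $\zeta_K=L(s,\rho_1)$, apply formula \eqref{Equation "chi_1"}, and conclude by Lagrange's theorem since $|N''|=[G:G']=d$. The extra details you supply (the coefficient comparison, the index computation, and the well-definedness via $\chi_1$ being a class function) are exactly the steps the paper leaves implicit.
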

	
	\begin{proof}
		The proof follows by observing that for a prime $ p $ unramified in $ K $, $a(p) = \chi_1(\sigma_p)$ where $\sigma_p$ is the Frobenius element (upto conjugation) at $p$, $d=|N''|$ and \eqref{Equation "chi_1"}.
	\end{proof}
	
	\begin{remark}\label{Remark 1}
		The map $ \phi_h $ is a $ |N_h''| $ to $ 1 $ map on $N''$. In particular if $ |N_h''| = 1 $ for some $ h $, then $ \phi_h(n) $ mentioned above is a bijection on $ N'' $.
	\end{remark}
	
	\begin{remark}
		The converse of Proposition \ref{Proposition "The general result"} is not true. An easy counter example is the case of $ A_3 $ in $ A_4 $. That is $ G=Gal(K/\mathbb{Q})=A_4 $ and $ Gal(K'/K)\cong A_3 $ where the values of $ a_K(p) $ are either $ 0,1 $ or $ 4 $.
	\end{remark}
	
	\begin{remark}
		If $ \textbf{1}_G $ denotes the trivial representation of $ G $ then it can be easily seen that $ \langle\textbf{1}_G,\chi_1\rangle_G=1 $.
	\end{remark}
	
	\section{Analytic continuation of the moment $ L $ function}\label{Section "Analytic continuation"}
	
	From now on we shall focus exclusively on primes $ p $ which are unramified in $ K $ without explicity mentioning it everytime. Suppose we denote by $ n,n',n'', h, n_h'' $ the quantities $ |G|, |N'|, |N''|, |H|, |N_h''| $ respectively. Since $ N $ is normal in $ G $, a representation of $ H=G/N $ can be seen as a representation of $ G $ via the quotient map $ G\to G/N $. Denote by $ \rho $ the representation of $ H $ given by 
	\[
	\rho = \bigoplus_{\pi\neq \textbf{1}_H}\pi^{\dim(\pi)},
	\]
	where $ \pi $ runs over all the irreducible representations of $ H $. Consider $ \rho $ as a representation of $ G $ and let $ \chi_2 $ denote the character of $ \rho $. Then $ \chi_2 $ is either equal to $ h-1 $ or equal to $ -1 $. Recall that $ a_K(p) = \chi_1(\sigma_p) $ where $ \sigma_p $ is the Frobenius element over $ p $. Suppose $ l $ is a given natural number, then the values of the various characters described so far can be found in the following table.
	
	\begin{table}[h!]
		\centering
		\caption{ }
		\label{Table 1}
		\begin{tabular}{|c|c|c|c|} 
			\hline
			$ \sigma_p=g=n_1n_2h\in G $  & $ \chi_2(g) $ & $ \chi_1(g) $   & $ a_K^l(p) $ \\
			\hline
			$ e_G $ 							& $ h-1 $ 	& $ n''$	& $ n''^l $ 		\\ 
			$ n_1\neq e_N $						& $ h-1 $	&  	$ 0$	&	$ 0 $			\\ 
			$ n_2\neq e_N $ 					& $ h-1 $	& $ n'' $	&	$ n''^l $		\\
			$ n_1\neq e_N $ and $ n_2\neq e_N $	& $ h-1 $	&   $ 0 $	&	$ 0 $			\\
			$ h\neq e_H $						& $ -1 $	& $n_h'' $	&	$ n_h''^l $		\\
			\hline
		\end{tabular}
	\end{table}

	The $ L $ series $ D_l(s) $ is absolutely convergent for $ Re(s) > 1 $ and from the multiplicative nature of $ a_K(n) $ we can deduce that $ D_l(s) $ has an Euler product in that region given by 
	\begin{equation}\label{Equation "Moment Euler product"}
		D_l(s) = \prod_p \left(1+ \frac{a_K^l(p)}{p^s} + \frac{a_K^l(p^2)}{p^{2s}} + \ldots \right),
	\end{equation}
	where the product runs over all primes $ p $. In order to estimate $ \mathcal{S}_l(X) $ we show that $ D_l(s) $ has analytic continuation to the left of the line $ Re(s)=1 $. We achieve this by imposing certain restrictions on the structure of $ G $ and writing $ D_l(s) $ as a product of certain $ L $ functions whose analytic properties are known. 
	
	For the remainder of this paper, we shall make the following assumptions,
	\begin{enumerate}[(I)]
		\item	For every $ h_1,h_2\in H\setminus\{e_H\} $, $ N_{h_1}'' = N_{h_2}'' $.\label{Assumption 1}
		\item	If, by abuse of notation we denote by $ \chi_1\otimes \chi_2 $ the tensor product of the corresponding representations, then $ L(s, \chi_1\otimes\chi_2) $ is entire.\label{Assumption 2}
		\item	The integer $ l $ satisfies $ n''^{l-1}\equiv n_h''^{l-1}\mod h $.\label{Assumption 3}
	\end{enumerate}
	
	Suppose we consider the system of linear equations given by 
	
	\begin{equation}\label{Equation "Linear System"}
		\begin{pmatrix}
			1	&	h-1	\\	1	&	-1
		\end{pmatrix}
		\begin{pmatrix}
			\alpha(l)	\\	\beta(l)
		\end{pmatrix}
		=
		\begin{pmatrix}
			n''^{l-1}\\	n_h''^{l-1}
		\end{pmatrix}.
	\end{equation}
	The system is solvable and the solution is given by 
	\begin{equation}\label{Equation "Linear System solutions"}
		\begin{pmatrix}
			\alpha(l)	\\	\beta(l)
		\end{pmatrix}
		=
		\begin{pmatrix}
			(n''^{l-1} - n_h''^{l-1})/h + n_h''^{l-1}\\	(n''^{l-1} - n_h''^{l-1})/h
		\end{pmatrix}.
	\end{equation}
	In particular the solutions are positive integers because of (\ref{Assumption 3}).
	
	\begin{theorem}\label{Theorem "Analytic continuation"}
		The $ L $ series $ D_l(s) $ has a meromorphic continuation to the plane $ Re(s) >1/2 $ with a pole of order $ \alpha(l) $ at the point $ s=1 $. Furthermore, the following relation holds,
		\begin{equation}\label{Equation "Analytic continuation D_l"}
			D_l(s) = L(s,\chi_1)^{\alpha(l)} L(s,\chi_1\otimes\chi_2)^{\beta(l)} U_l(s),
		\end{equation}
		where $ U_l(s) $ is absolutely convergent for $ Re(s) > 1/2 $.
	\end{theorem}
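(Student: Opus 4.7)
The plan is to factor $D_l(s)$ through the Artin $L$-functions $L(s,\chi_1)$ and $L(s,\chi_1\otimes\chi_2)$ by matching Dirichlet coefficients at primes. Starting from the Euler product \eqref{Equation "Moment Euler product"} in the region $Re(s)>1$, I would take logarithms and separate each prime's contribution into a ``linear part'' $a_K^l(p)/p^s$ and a ``higher part'' coming from the prime powers $p^k$ with $k\geq 2$. Using the standard bound $a_K(p^k)\ll_\epsilon p^{k\epsilon}$, the higher part is dominated by $\sum_p\sum_{k\geq 2} p^{kl\epsilon-ks}$ and thus defines an absolutely convergent Dirichlet series in $Re(s)>1/2$. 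The same bookkeeping applied to the Euler products of $L(s,\chi_1)$ and $L(s,\chi_1\otimes\chi_2)$ yields analogous decompositions, with the higher parts controlled by the dimensions of the representations (because Frobenius eigenvalues are roots of unity), again absolutely convergent in $Re(s)>1/2$. The contribution of the finitely many ramified primes adds a bounded, non-vanishing, holomorphic factor that can be absorbed into $U_l$.

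The heart of the argument is the pointwise identity
\[
a_K^l(p)=\alpha(l)\,\chi_1(\sigma_p)+\beta(l)\,\chi_1(\sigma_p)\chi_2(\sigma_p)
\]
at every $p$ unramified in $K$. I would verify this by going through Table \ref{Table 1} row by row: the rows with $\chi_1(\sigma_p)=0$ are trivial (both sides vanish), while the others reduce to the two scalar equations $\alpha(l)+\beta(l)(h-1)=n''^{l-1}$ (when the $H$-component of $\sigma_p$ is trivial) and $\alpha(l)-\beta(l)=n_h''^{l-1}$ (when it is not), which is exactly the system \eqref{Equation "Linear System"}. Assumption (\ref{Assumption 1}) is precisely what allows a single pair $(\alpha(l),\beta(l))$ to handle all non-trivial $H$-components uniformly, and Assumption (\ref{Assumption 3}) guarantees that the unique solution \eqref{Equation "Linear System solutions"} consists of non-negative integers, so that the exponents in \eqref{Equation "Analytic continuation D_l"} make sense.

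Combining the two steps, the function $\log D_l(s)-\alpha(l)\log L(s,\chi_1)-\beta(l)\log L(s,\chi_1\otimes\chi_2)$ becomes (modulo the finite ramified contribution) an absolutely convergent Dirichlet series on $Re(s)>1/2$; exponentiating yields the factorization \eqref{Equation "Analytic continuation D_l"} with $U_l(s)$ holomorphic and nowhere-vanishing in that half-plane. Since $L(s,\chi_1)=\zeta_K(s)$ has a simple pole at $s=1$ and $L(s,\chi_1\otimes\chi_2)$ is entire by Assumption (\ref{Assumption 2}), the pole order of $D_l$ at $s=1$ is exactly $\alpha(l)$ \emph{provided} $L(1,\chi_1\otimes\chi_2)\neq 0$. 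Establishing this non-vanishing is the step I expect to be the main obstacle: one first checks via Frobenius reciprocity (invoking the remark $\langle \textbf{1}_G,\chi_1\rangle_G=1$) that the trivial representation of $G$ does not occur as a constituent of $\chi_1\otimes\chi_2$, so $L(s,\chi_1\otimes\chi_2)$ factors as a product of Artin $L$-functions attached to non-trivial irreducible representations; one then invokes the classical non-vanishing at $s=1$ of an entire Artin $L$-function of a non-trivial irreducible representation. (As a consistency check, Landau's theorem applied to the Dirichlet series $D_l(s)$, whose coefficients are non-negative, forces $s=1$ to be a genuine singularity of $D_l$, which is compatible only with the desired pole of order $\alpha(l)$.)
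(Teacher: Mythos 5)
Your proposal is correct and follows essentially the same route as the paper: the pointwise identity $a_K^l(p)=\alpha(l)\,\chi_1(\sigma_p)+\beta(l)\,(\chi_1\chi_2)(\sigma_p)$ read off from Table \ref{Table 1} via the linear system \eqref{Equation "Linear System"}, combined with absolute convergence of the leftover Euler data for $Re(s)>1/2$ — your logarithmic bookkeeping is only a presentational variant of the paper's direct comparison of Euler factors. The one point where you go beyond the paper's argument is in pinning down the pole order as exactly $\alpha(l)$ by verifying $L(1,\chi_1\otimes\chi_2)\neq 0$ (via $\langle \textbf{1}_G,\chi_1\otimes\chi_2\rangle_G=0$ and non-vanishing of Artin $L$-functions of non-trivial irreducible characters at $s=1$) together with $U_l(1)\neq 0$; the paper's proof asserts the pole order without addressing this.
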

	
	\begin{proof}
		The proof follows by computing the Euler product representation for the function 
		\[
		D_l(s) / (L(s,\chi_1)^{\alpha(l)} L(s,\chi_1\otimes\chi_2)^{\beta(l)}).
		\]
		The Euler product of the numerator is given in \eqref{Equation "Moment Euler product"} and the Euler product of the denominator can be computed from the known Euler product formula for $ L(s,\chi_1) $ and $ L(s,\chi_1\otimes\chi_2) $. On comparing the Euler factors and using Table \ref{Table 1} and \eqref{Equation "Linear System solutions"} we see that the Euler factors of the quotient above at the unramified primes $ p $ are of the form
		\[
		\left(1 + \frac{\star}{p^{2s}} + \ldots\right).
		\]
		Therefore the quotient is absolutely convergent for $ Re(s)>1/2 $. If we denote the quotient by $ U_l(s) $, then \eqref{Equation "Analytic continuation D_l"} follows. The claim about the poles of $ D_l(s) $ follows from the observation $ L(s,\chi_1) $ has a simple pole at $ s=1 $ and from (\ref{Assumption 2}) above.
	\end{proof}

	\section{Estimates on the moments}\label{Section "Estimates on Moments"}
	
	The $ L $ series $ L(s,\chi_1) $ satisfies $ L(s,\chi_1) = \zeta(s)L(s,\chi_1') $ for some character $ \chi_1' $. Therefore we can rewrite \eqref{Equation "Analytic continuation D_l"} as 
	\begin{equation}\label{Equation "Moment zeta product"}
		D_l(s) = \zeta(s)^{\alpha(l)} L(s, \chi_1')^{\alpha(l)}L(s,\chi_1\otimes\chi_2)^{\beta(l)}.
	\end{equation}
	The estimates on $ \mathcal{S}_l(X) $ eventually boil down to subconvexity estimates on the $ L $ series occurring in \eqref{Equation "Moment zeta product"}. 
	
	By the Perron's formula (see \cite{Iwaneic book} for example), for $ 1\leq T\leq X $ we have 
	
	\begin{equation}\label{Equation "Perron's Formula"}
		\mathcal{S}_l(X) = \frac{1}{2\pi i} \int_{1+\epsilon - iT}^{1+\epsilon +iT} D_l(s) \frac{X^s}{s} ds + O\left(\frac{X^{1+\epsilon}}{T}\right).
	\end{equation}
	
	Suppose $ \theta_1, \theta_2,\theta_3, \theta_4 $ are exponents towards the $ t $ aspect subconvexity problem for\\ $ \zeta(s), L(s,\chi_1'), L(s,\chi) $ and $ L(s,\chi_1\otimes\chi_2) $ respectively. More precisely, for $ 1/2\leq \sigma \leq 1$ and $ |t| \geq 1 $, suppose that
	
	\begin{equation}\label{Equation "Subconvexity"}
		\begin{aligned}
			|\zeta(\sigma + it)| \ll |t|^{\theta_1(1-\sigma)}, \\
			|L(\sigma + it,\chi_1')| \ll |t|^{\theta_2(1-\sigma)}, \\
			|L(\sigma + it,\chi_1)| \ll |t|^{\theta_3(1-\sigma)}, \\
			|L(\sigma + it, \chi_1\otimes\chi_2)| \ll |t|^{\theta_4(1-\sigma)}.
		\end{aligned}
	\end{equation}
	
	Let $ X\gg 0 $ and let $ 1\leq T\leq X $ be a parameter to be optimally chosen later. Let
	\begin{equation}\label{Equation "I definition"}
		I =  \frac{1}{2\pi i} \int_{1+\epsilon - iT}^{1+\epsilon +iT} D_l(s) \frac{X^s}{s} ds.
	\end{equation}
	Starting from \eqref{Equation "Perron's Formula"} we shift the line of integration to the line $ Re(s) = 1/2 + \epsilon $. More precisely we integrate over the rectangle with vertices $ \{1+\epsilon -iT, 1+\epsilon +iT, 1/2 + \epsilon +iT, 1/2 + \epsilon -iT\} $ oriented in the anti clockwise direction. Suppose we let
	\begin{align}
		J_1 = \int_{1/2+\epsilon + iT}^{1/2+\epsilon - iT} D_l(s) \frac{X^s}{s} ds, \label{Equation "Vertical integral"}\\
		J_2 = \int_{1+\epsilon + iT}^{1/2+\epsilon+ iT} D_l(s) \frac{X^s}{s} ds, \label{Equation "Horizontal integral 1"}\\
		J_2 = \int_{1/2+\epsilon - iT}^{1+\epsilon- iT} D_l(s) \frac{X^s}{s} ds. \label{Equation "Horizontal integral 2"}
	\end{align}
	
	From Theorem \ref{Theorem "Analytic continuation"} and the Cauchy's theorem we get 
	
	\begin{equation}\label{Equation "I equation 1"}
		|I| = XP_l(\log(X)) + |J_1| + |J_2| + |J_3| + O\left(\frac{X^{1+\epsilon}}{T}\right),
	\end{equation}
	where $ P_l $ is a polynomial of degree equal to $ \alpha(l)-1 $. The first term in the right hand side of \eqref{Equation "I equation 1"} is the residue of the pole of $ D_l(s)X^ss^{-1} $ at the point $ s=1 $ which constitutes the main term and the other integrals are the error terms. We shall proceed to estimate them one by one.
	
	From \eqref{Equation "Subconvexity"} it follows that 
	\begin{align*}
		|J_1| &\ll X^{1/2+\epsilon}\int_{1}^{T} t^{(\alpha(l)\theta_3+\beta(l)\theta_4)/2 - 1+\epsilon}dt\\
		&\ll X^{1/2 + \epsilon}T^{(\alpha(l)\theta_3+\beta(l)\theta_4)/2 + \epsilon}.
	\end{align*}
	
	Similarly we have estimates for the horizontal integrals as
	\begin{align*}
		|J_2| + |J_3| \ll X^{1/2+\epsilon} T^{(\alpha(l)\theta_3 + \beta(l)\theta_4)/2 - 1} + X^{1+\epsilon}T^{-1}.
	\end{align*}
	
	If we choose $ T=X^\delta $ where $ \delta = 1/(\alpha(l)\theta_3 + \beta(l)\theta_4 + 2) $ then we see that 
	
	\begin{equation}\label{Equation "I estimate general"}
		|I| = XP_l(\log(X)) + O(X^{1 - \delta + \epsilon}).
	\end{equation}
	From the work of Heath-Brown \cite{Heath-Brown}, we see that $ \theta_3 $ can be chosen as $ n''/3 $.
	We record these computations as a theorem.
	
	\begin{theorem}\label{Theorem "Moments estimate general"}
		With notation as above, there exists a positive quantity $ \delta $ depending on $ K $ such that for every $ \epsilon > 0 $
		\[
		\mathcal{S}_l(X) = XP_l(\log(X)) + O(X^{1-\delta + \epsilon}),
		\]
		where $ P_l $ is as above and the implied constant depends on $ K,\epsilon $. More precisely,
		\begin{equation}\label{Equation "Delta definition"}
			\delta = \frac{1}{\alpha(l)\theta_3 + \beta(l)\theta_4 + 2}.
		\end{equation}
		In particular $ \theta_3 $ can be chosen to be $ n''/3 $.
	\end{theorem}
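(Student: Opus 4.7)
The plan is to combine the factorization and analytic continuation supplied by Theorem \ref{Theorem "Analytic continuation"} with a standard Perron-type contour shift, then optimize the height of the contour against the subconvexity exponents \eqref{Equation "Subconvexity"}. I would begin from Perron's formula \eqref{Equation "Perron's Formula"}, which expresses $\mathcal{S}_l(X)$ as the integral $I$ of \eqref{Equation "I definition"} up to an error $O(X^{1+\epsilon}/T)$ for a free parameter $T\in[1,X]$ to be chosen at the end.

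Next I shift the line of integration from $Re(s)=1+\epsilon$ to $Re(s)=1/2+\epsilon$ along the rectangle with vertices $1+\epsilon\pm iT$ and $1/2+\epsilon\pm iT$. The shift is legal because Theorem \ref{Theorem "Analytic continuation"} provides meromorphic continuation of $D_l(s)$ to the strip $Re(s)>1/2$ with a single pole at $s=1$ of order $\alpha(l)$, arising from the $\zeta(s)^{\alpha(l)}$ factor in the identity \eqref{Equation "Moment zeta product"}. The residue of $D_l(s)X^s/s$ at $s=1$ produces exactly the main term $XP_l(\log X)$ with $\deg P_l=\alpha(l)-1$. What remains is to control the vertical integral $J_1$ along $Re(s)=1/2+\epsilon$ and the two horizontal integrals $J_2,J_3$ at height $\pm T$.

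For $J_1$ I insert the factorization \eqref{Equation "Moment zeta product"} and apply the subconvexity bounds \eqref{Equation "Subconvexity"}: on the critical line at height $t$, each L-factor is $\ll |t|^{\theta/2+\epsilon}$, so the integrand is $\ll X^{1/2+\epsilon}|t|^{(\alpha(l)\theta_3+\beta(l)\theta_4)/2-1+\epsilon}$, yielding $J_1\ll X^{1/2+\epsilon}T^{(\alpha(l)\theta_3+\beta(l)\theta_4)/2+\epsilon}$. For the horizontal pieces $J_2,J_3$ a convexity interpolation in $\sigma$ between $1/2+\epsilon$ and $1+\epsilon$ at fixed height $T$ gives $J_2+J_3\ll X^{1+\epsilon}/T+X^{1/2+\epsilon}T^{(\alpha(l)\theta_3+\beta(l)\theta_4)/2-1}$, whose second summand is dominated by $J_1$ and whose first summand has the same order as the Perron tail. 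Balancing the Perron error $X^{1+\epsilon}/T$ against the vertical bound forces the choice $T=X^\delta$ with $\delta=1/(\alpha(l)\theta_3+\beta(l)\theta_4+2)$, producing the stated error.

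The final ingredient is an explicit admissible value of $\theta_3$: since $\chi_1=\mathrm{Ind}_{G'}^{G}\mathbf{1}_{G'}$ gives $L(s,\chi_1)=\zeta_K(s)$ for the degree-$n''$ field $K$, Heath-Brown's subconvex bound for Dedekind zeta functions \cite{Heath-Brown} furnishes $\theta_3\le n''/3$. The argument is mechanical once Theorem \ref{Theorem "Analytic continuation"} is in hand; the only point requiring care is verifying that the subconvexity exponents $\theta_3,\theta_4$ are genuinely finite, so that the denominator $\alpha(l)\theta_3+\beta(l)\theta_4+2$ yields a strictly positive $\delta$. Finiteness of $\theta_4$ is automatic from the holomorphy of $L(s,\chi_1\otimes\chi_2)$ guaranteed by assumption \eqref{Assumption 2} together with the functional equation and Phragmén--Lindelöf, so $\delta>0$ unconditionally and the theorem follows.
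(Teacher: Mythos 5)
Your proposal is correct and follows essentially the same route as the paper: Perron's formula, a contour shift to $Re(s)=1/2+\epsilon$ justified by Theorem \ref{Theorem "Analytic continuation"}, the residue at $s=1$ giving $XP_l(\log X)$ with $\deg P_l=\alpha(l)-1$, subconvexity bounds \eqref{Equation "Subconvexity"} on the vertical and horizontal segments, and the balance $T=X^{\delta}$ with $\delta=1/(\alpha(l)\theta_3+\beta(l)\theta_4+2)$, with $\theta_3=n''/3$ from Heath-Brown. Your closing remark on the finiteness of $\theta_4$ via Phragm\'en--Lindel\"of is a small addition the paper only gestures at (it notes the convexity bound suffices), but it does not change the argument.
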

	
	Establishing sharp upper bounds for the constants $ \theta_1,\theta_2,\theta_3,\theta_4 $ is an active area of research with the ultimate goal being the Lindel\"off hypothesis which predicts that $ \theta_i $'s can be arbitrarily small. Nevertheless, for the purpose of Theorem \ref{Theorem "Moments estimate general"} it suffices that $ \theta_3,\theta_4 $ are positive, which is guaranteed by the convexity bound on the respective $ L $ functions. Below we mention two special cases when the error estimate in Theorem \ref{Theorem "Moments estimate general"} can be improved.
	
	Suppose that the Dedekind conjecture is true for $ K $, that is suppose that $ L(s,\chi_1') $ is entire. Observe that we can replace $ \theta_3 $ with $ \theta_1 + \theta_2 $ in \eqref{Equation "Delta definition"} and Theorem \ref{Theorem "Moments estimate general"} would still be true. Now suppose that $ \chi_1' $ is a direct sum of monomial representations. Then $ L(s,\chi_1) $ can be written as a product of Hecke $ L $ functions. Now, a recent result of Bourgain \cite{Bourgain} states that $ \theta_1 $ can be taken to be equal to $ 13/42 $ and using the subconvexity results of S\"ohne \cite{Sohne}, we can take $ \theta_2 $ as $ n''/3 - 1 $. Thus in this case we can see that $ \theta_1 + \theta_2 < \theta_3 $  and thus this would lead to a larger value for $ \delta $ and in turn a sharper error estimate in Theorem \ref{Theorem "Moments estimate general"}.
	
	If we assume the strong Artin conjecture for all $ 2 $ dimensional representations, then the recent Weyl subconvexity result of Aggarwal \cite{Aggarwal} could be used to derive similar bounds for $ \theta_1 + \theta_2 $, of course under suitable assumptions on $ \chi_1' $.
	
	\section{Some examples}\label{Section "Some Examples"}
	
	In this section we consider some examples for $ G $ to demonstrate that the restrictions imposed on $ G $ are not too heavy. We shall denote the cyclic group of order $ n $ as $ C_n $. Many examples are obtained when $ H=C_n $ so that the condition (\ref{Assumption 1}) is automatically satisfied. In particular, the case where $ H=C_2 $ gives rise to many standard groups.
	
	\subsection{The Galois case}\label{Subsection "The Galois Case"}
	As a `zeroth' example, consider the case when $ H $ is the trivial group. Although we have assumed that $ H $ is non trivial throughout the paper, the arguments throughout follow vacuously for the trivial case as well. In this situation we choose $ G=G'=N $ so that $ K $ itself is a Galois extension of $ \mathbb{Q} $. In this case we see that $ \beta(l)=0 $ so that $ \alpha(l) = n''^{l-1} $ so that $ D_l(s) = L(s,\chi_1)^{\alpha(l)}U_l(s) $ (c.f. Theorem \ref{Theorem "Analytic continuation"}). From Theorem \ref{Theorem "Moments estimate general"} we obtain the asymptotic formula of Theorem \ref{Theorem "Chand Good"}.
	
	\subsection{$ N\rtimes C_2 $ for an Abelian group $ N $}
	
	Suppose $ G=N\rtimes C_2 $ where $ C_2 $ acts via inversion. Then every subgroup $ N' $ is normal in $ N $ and is fixed by $ C_2 $. Condition (\ref{Assumption 1}) is clearly true. Since $ \chi_2 $ is a one dimensional representation of $ G $, the condition (\ref{Assumption 2}) also follows from the known theory of twisting via characters. In particular the elements fixed by the non identity element of $ C_2 $ are precisely the elements of order two. From the structure theorem of finite Abelian groups we see that $ n'' \equiv n_{-1}'' \mod 2 $, where $ -1 $ is the non identity element of $ C_2 $. Therefore clearly condition (\ref{Assumption 3}) also follows.
	
	In particular we can choose $ N $ as the cyclic group $ C_n $ and get $ N\rtimes C_2 $ as $ D_{2n} $, the dihedral group of order $ 2n $.
	
	\subsection{Symmetric Groups $ S_n $ for $ n>5 $} The symmetric group on the objects $ \{1,2,\ldots, n\} $ has the well known semi direct product representation as $ A_n\rtimes C_2 $, where $ A_n $ is the alternating group on $ n $ objects and here $ C_2 = \{e, (1,2)\} $ acts via conjugation. First suppose $ n> 5 $. Since $ A_n $ is simple in this case, the only choice of $ N' $ is the identity subgroup because we want $ N' $ to be normal in $ N $. The fixed points of $ A_n $ under conjugation by $ (1,2) $ are precisely those permutations which fix both $ \{1,2\} $. Therefore the number of the fixed elements is $ (n-1)!/2 $. Again conditions (\ref{Assumption 1}) and (\ref{Assumption 2}) are staisfied as above. Finally condition (\ref{Assumption 3}) is also true because we have assumed that $ n>5 $.
	
	\subsection{The alternating group $ A_4 $} The group $ A_4 $ has a semi direct product representation given by $ V_4\rtimes C_3 $ where $ V_4 $ is the Klien's four group. Here $ C_3 $ is identified with $ \{1, (1,2,3), (1,3,2)\} $ and $ V_4 $ is identified with $ \{1, (1,2)(3,4), (1,4)(2,3),(1,3)(2,4)\} $ and once again the action is via conjugation. Clearly only the identity in $ V_4 $ is fixed by non identity elements of $ C_3 $. Condition (\ref{Assumption 2}) follows as usual. If $ N'=\{e_N\} $ then (\ref{Assumption 3}) is true for any $ l $, if $ |N'|=2 $, then condition (\ref{Assumption 3}) holds only for odd values of $ l $.
	
	\begin{remark}
		It is worth mentioning that the inverse Galois problem for the groups mentioned above are well known and the results of this paper are truly of interest.
	\end{remark}
	
	\section*{Acknowledgments}
	
	The second named author would like to thank Teja Srinivas of the Indian Institute of Science, Bangalore and the ANTs group, Harish Chandra Research Institute for wonderful discussions. We thank the referee for valuable comments on the first draft of the paper, particularly for suggesting the authors study the dihedral case.

\end{document}